\definecolor{col1}{rgb}{0.6, 0.7, 0.8}
\definecolor{col2}{rgb}{0.67, 0.75, 0.8}
\definecolor{col3}{rgb}{0.74, 0.8, 0.8}
\definecolor{col4}{rgb}{0.81,0.85, 0.8}
\definecolor{col5}{rgb}{0.98,0.99,0.6}
\definecolor{textcol}{rgb}{0.37,0,0.57}
\newtheorem{thm}{Theorem}
\newtheorem{lem}{Lemma}
\begin{document}

\begin{abstract} Let $k$ be a field of characteristic zero. Let $F = X + H$ be a polynomial map from $k^n$ to $k^n$, where $X$ is the identity map and $H$ has only degree two terms and higher. We say that the Jacobian matrix $JH$ of $H$ is strongly nilpotent with index $p$ if for all $X^{(1)},\ldots,X^{(p)} \in k^n$ we have
\begin{align*}
JH(X^{(1)})\ldots JH (X^{(p)}) = 0.
\end{align*}
Every $F$ of this form is a polynomial automorphism, i.e.\ there is a second polynomial map $F^{-1}$ such that $F \circ F^{-1} = F^{-1} \circ F = X$. We prove that the degree of the inverse $F^{-1}$ satisfies
\begin{align*}
\deg(F^{-1}) \leq \deg(F)^{p-1}, 
\end{align*}
improving in the strongly nilpotent case on the well known degree bound $\deg(F^{-1}) \leq \deg(F)^{n-1}$ for general polynomial automorphisms.
\end{abstract}

\title[]{A degree bound for strongly nilpotent polynomial automorphisms}

\keywords{Jacobian conjecture, strongly nilpotent, formal inversion, polynomial automorphism.}
\subjclass[2010]{Primary: 13F20, 13F25, 14R15. Secondary: 05C05.}



\author{Samuel G.~G.~Johnston}

\maketitle

\section{Introduction}

\subsection{The Jacobian conjecture}
Let $k$ be a field of characteristic zero, and consider the ring $k[X_1,\ldots,X_n]$ of polynomials in $n$ variables with coefficients in $k$. A polynomial map $F = (F_1,\ldots,F_n)$ is a map $F:k^n \to k^n$ such that each component $F_i$ of $F$ is an element of $k[X_1,\ldots,X_n]$. We define the Jacobian matrix $JF$ of $F$ to be the $n \times n$ matrix whose $(i,j)^{\text{th}}$ entry is given by the element $JF_{i,j} := \frac{ \partial}{\partial X_j} F_i$ of $k[X_1,\ldots,X_n]$. The Jacobian determinant $jF := \det (JF)$ of $F$ is itself an element of the polynomial ring $k[X_1,\ldots,X_n]$. We say that $F$ is a Keller map if its Jacobian determinant is equal to a non-zero constant in $k$. The celebrated Jacobian conjecture asserts that every Keller map is a polynomial automorphism, that is, there exists another polynomial map $F^{-1}$ such that $F \circ F^{-1} = F^{-1} \circ F = X$, where $X$ is the identity map on $k^n$. The Jacobian conjecture is easily verified to be true when the dimension $n$ is equal to $1$ (every Keller map in one dimension is affine), but remains open to this day for all $n \geq 2$. 

We define the degree $\mathrm{deg}(F) := \max_{ 1 \leq i \leq n} \mathrm{deg}(F_i)$ of a polynomial map to be the maximum of the degrees of its components $F_1,\ldots,F_n$ in the variables $X_1,\ldots,X_n$. Numerous authors have studied questions related to whether Keller maps of certains degrees are polynomial automorphisms. Notably, Wang \cite{wang} has shown that every 
quadratic Keller map (i.e.\ of degree at most two) is a polynomial automorphism. While the proof of Wang's theorem is fairly straightforward (see e.g.\ \cite[Page 298]{BCW}), a proof of the cubic case remains elusive. More remarkably still, the Bass, Connell and Wright \cite{BCW} reduction states that if the Jacobian conjecture is true for cubic maps in every dimension, then it is true in general.

A related subject of investigation is how the degrees of polynomial automorphisms and their inverses are related to one another. The central result here is the well known degree bound, which states that whenever $F$ is a polynomial automorphism, it must be the case that the degree of its inverse $F^{-1}$ satisfies
\begin{align} \label{eq:degbound}
\mathrm{deg}(F^{-1}) \leq \mathrm{deg}(F)^{n-1}.
\end{align}
See e.g.\ Theorem 1.5 of \cite{BCW}. 

Suppose $F$ is a polynomial map, and we are seeking to identify whether $F$ is a polynomial automorphism. By composing $F$ with a translation, we may assume without loss of generality that $F(0)=0$. The Jacobian matrix $JF(0)$ of $F$ evaluated at $0 \in k^n$ is an $n \times n$ matrix with entries in $k$ encoding the linear part of $F$. If $F$ is a Keller map, then $JF(0)$ is clearly invertible, and we may precompose $F$ with the (linear) mapping $JH(0)^{-1}:k^n \to k^n$ to obtain a new polynomial map $\tilde{F} := F \circ JH(0)^{-1}$. A brief calculation tells us that $\tilde{F}$ takes the form $\tilde{F} = X + H$, where $X$ is the identity map and $H$ is a polynomial map such that each component $H_i$ of $H$ contains only degree two terms or higher. Clearly $\mathrm{deg}(H) = \mathrm{deg}(\tilde{F}) = \mathrm{deg}(F)$, $\tilde{F}$ is also a Keller map, and $\tilde{F}$ is a polynomial automorphism if and only if $F$ is a polynomial automorphism. Moreover, since $j\tilde{F}(X) $ is equal to a constant, by evaluating $j\tilde{F} = \mathrm{det}(I + JH(X))$ at $X = 0 \in k^n$, we see that this constant must be equal to $1 \in k$. 
In short, in attempting to tackle the Jacobian conjecture, it is sufficient to establish the polynomial invertibility of maps of the form $F = X +H$, where $H$ contains only degree two terms or higher, and $\mathrm{det}( I + JH(X)) =1$ for all $X \in k^n$. In fact, we will see in the next section that it is possible to reduce the problem further.

\subsection{Nilpotent and strongly nilpotent polynomial maps}
We say that a polynomial map $H$ is homogenous of degree $d$ if each component $H_i$ of $H$ is a homogenous polynomial of degree $d$ in the variables $X_1,\ldots,X_n$. Equivalently, each component of $H$ satisfies $H_i(\lambda X) = \lambda^d H_i(X)$ for all $\lambda \in k, X \in k^n$. 

According to the Bass, Connell and Wright reduction \cite{BCW} of the Jacobian conjecture, it is sufficient to establish the polynomial invertibility of every map of the form $F = X + H$, where the Jacobian determinant of $F$ is equal to one, and $H$ is cubic homogenous. Whenever $H$ is homogenous of any degree $d \geq 2$, and we have $\mathrm{det}( I + JH) = 1$, it is straightforward to show that $JH$ is nilpotent, i.e.  
\begin{align} \label{eq:nil}
JH(X)^e = 0 \qquad \text{for every $X$ in $k^n$},
\end{align}
for some $e \leq n$. See for instance Section 2.5 of \cite{JP}. In the sequel we will write $e$ for the least integer $1 \leq e' \leq n$ such that $JH(X)^{e'} = 0$ for every $X$ in $k^n$, and refer to $e$ as the index of nilpotency.

Polynomial maps $F = X + H$ where $H$ has nilpotent Jacobian matrix have been the subject of intense study, with research in this area constituting the seventh chapter of van den Essen's monograph on polynomial automorphisms \cite{EPA}. 
In particular, several authors have wondered whether it is in fact the index $e \leq n$ of nilpotency, rather than the underlying dimension $n$, that controls the degree of the inverse of a polynomial automorphism. Indeed, it was conjectured by Dru\.zkowski and Rusek \cite{DR} that whenever $F = X + H$ where $JH$ is nilpotent with index $e \leq n$, we have the strengthening
\begin{align} \label{eq:DR}
\deg(F^{-1}) \leq^? \deg(F)^{e-1}
\end{align}
of \eqref{eq:degbound}. 

It turns out that \eqref{eq:DR} is false in general; Van den Essen showed in \cite{E} that 
\begin{align*}
H = (3X_4^2X_2 - 2X_3X_4X_5, X_4^2X_5, X_4^3, X_5^3,0,\ldots,0)
\end{align*}
provides a cubic counterexample in dimensions $n \geq 5$. In that same paper however, van den Essen showed that \eqref{eq:DR} holds whenever the $\mathrm{deg}(F) = 3$ and $n \leq 4$. 

It has been observed that for polynomial maps $H$ of low degree and low dimension, the nilpotency condition \eqref{eq:nil} can often be strengthened significantly. In this direction, we say that a Jacobian matrix $JH$ is strongly nilpotent with index $p$ if 
\begin{align} \label{eq:nil2}
JH(X^{(1)}) \ldots JH(X^{(p)}) = 0 \qquad \text{for all $X^{(1)},\ldots,X^{(p)}$ in $k^n$},
\end{align}
and $p$ is the smallest integer with this property. For emphasis we will call the original version of nilpotency in \eqref{eq:nil} \emph{weak nilpotency}, and refer to $e$ as the \emph{weak index} (of nilpotency). If $JH$ is strongly nilpotent with index $p$, then $JH$ is weakly nilpotent with some index $e \leq p$.

It was shown by van den Essen and Hubbers \cite{EH} that every $F = X + H$ with $JH$ strongly nilpotent is a polynomial automorphism. In fact they show that the strong nilpotency of $JH$ is equivalent to $H$ being linearly triangularizable, that is, the existence of a linear map $T:k^n \to k^n$ such that $J(T^{-1}HT)$ is strictly upper triangular. See e.g.\ Theorem 7.4.4 of \cite{EPA} for a proof. As suggested above, it transpires that for certain dimensions and degrees, nilpotency is equivalent to strong nilpotency. Indeed, Meisters and Olech \cite{MO} have shown that whenever $\deg(H) =2$ and $n \leq 4$, $JH$ is weakly nilpotent if and only if $JH$ is strongly nilpotent. Let us remark finally that if $JH$ is strongly nilpotent with some index $p$, then it must be the case that $p \leq n$; see Exercise 
7.4.2 of \cite{EPA}.

\subsection{Main results}
We now state our first main result, which is concerned with the degree of the inverse of strongly nilpotent polynomial automorphisms.

\begin{thm} \label{thm:main}
Let $F = X + H$ be a polynomial map from $k^n$ to $k^n$ such that the Jacobian matrix $JH$ of $H$ is strongly nilpotent with index $p$ (i.e. \eqref{eq:nil2} holds). Then $F$ is a polynomial automorphism, and the degree of the inverse $F^{-1}$ of $F$ satisfies
\begin{align*}
\mathrm{deg}(F^{-1}) \leq \mathrm{deg}(F)^{p-1}.
\end{align*}
\end{thm}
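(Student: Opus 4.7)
The plan is to construct $F^{-1}$ via a Picard-style iteration and to show that the strong nilpotency hypothesis causes this iteration to stabilize after only $p-1$ steps. Define polynomial maps $G_k$ by $G_0 := X$ and $G_{k+1} := X - H(G_k)$ for $k \geq 0$; an easy induction gives $\deg G_k \leq \deg(F)^k$. Since the theorem of van den Essen and Hubbers \cite{EH} already guarantees that $F^{-1}$ exists as a polynomial map, it suffices to prove $F(G_{p-1}) = X$. This is equivalent to $G_p = G_{p-1}$, because $F(G_{p-1}) = G_{p-1} + H(G_{p-1}) = G_{p-1} + (X - G_p)$. Once this is established, the degree bound $\deg F^{-1} = \deg G_{p-1} \leq \deg(F)^{p-1}$ is immediate.

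To prove $G_p - G_{p-1} = 0$, I introduce the successive differences $D_k := G_k - G_{k-1}$ for $k \geq 1$, with the convention $D_0 := X$. The polynomial Taylor identity
\begin{align*}
H(Y + Z) - H(Y) = \int_0^1 JH(Y + tZ)\,dt \cdot Z,
\end{align*}
valid in any characteristic zero polynomial algebra when the formal integration $\int_0^1 t^j\,dt := 1/(j+1)$ is extended $k$-linearly, then yields $D_{k+1} = -M_k D_k$ for $k \geq 1$, where $M_k := \int_0^1 JH(G_{k-1} + tD_k)\,dt$. Likewise $D_1 = -H(X) = -M_0 D_0$ with $M_0 := \int_0^1 JH(tX)\,dt$, using $H(0) = 0$. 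Iterating these relations gives
\begin{align*}
D_p = (-1)^p M_{p-1} M_{p-2} \cdots M_0 \cdot X.
\end{align*}

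The final step is to observe that $M_{p-1} \cdots M_0 = 0$. Because matrix multiplication and the formal integrals are both $k$-linear, this product equals the $p$-fold iterated formal integral whose integrand is the matrix product
\begin{align*}
JH(G_{p-2} + t_{p-1}D_{p-1}) \cdot JH(G_{p-3} + t_{p-2}D_{p-2}) \cdots JH(G_0 + t_1 D_1) \cdot JH(t_0 X),
\end{align*}
a product of $p$ Jacobians of $H$ evaluated at polynomial expressions in $X$ and the $t_i$'s. Strong nilpotency \eqref{eq:nil2} is exactly the statement that such a product vanishes identically under any substitution of arguments in a commutative $k$-algebra, so the integrand is zero and hence so is $D_p$. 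The main obstacle I foresee is setting up the formal Taylor identity and the interchange of formal integration with matrix multiplication in a clean, coordinate-free way; once that bookkeeping is done, no tree combinatorics or multi-index expansions are required and the proof collapses to a handful of lines.
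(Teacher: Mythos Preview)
Your argument is correct and takes a genuinely different route from the paper. The paper proceeds via the tree-expansion inversion formula \eqref{eq:Gform}: it shows that strong nilpotency forces certain ``fern'' sums to vanish (Lemma~\ref{lem:fern}), then partitions the trees of height $\geq p$ in $\mathbb{S}_{i,\alpha}$ into equivalence classes on which the energy sums cancel, yielding the refined formula Theorem~\ref{thm:power}; the degree bound then follows by counting leaves in trees of height $\leq p-1$ with branching bounded by $d$. Your approach instead builds $F^{-1}$ directly by the Picard iteration $G_{k+1}=X-H(G_k)$ and shows it stabilises at step $p-1$, the identity $D_p=(-1)^pM_{p-1}\cdots M_0\,X$ together with the (purely algebraic) Fubini interchange of the formal $\int_0^1$ reducing everything to one application of the strong nilpotency identity. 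One point worth making explicit: the hypothesis \eqref{eq:nil2} quantifies only over $X^{(\ell)}\in k^n$, whereas you substitute elements of $k[X,t_0,\ldots,t_{p-1}]^n$; since $k$ has characteristic zero and is therefore infinite, vanishing at all $k$-points upgrades to the polynomial identity $JH(Y^{(1)})\cdots JH(Y^{(p)})=0$ in $k[Y^{(1)},\ldots,Y^{(p)}]$, which then specialises to any commutative $k$-algebra. Your route is shorter and more elementary for Theorem~\ref{thm:main} on its own, and in fact the equality $F(G_{p-1})=X$ already forces $G_{p-1}$ to be the unique formal inverse, so the appeal to \cite{EH} is unnecessary and you re-prove that result in passing. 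What the paper's combinatorial approach buys in exchange is the finer structural statement Theorem~\ref{thm:power} about exactly which tree terms survive in the inversion formula, which is of independent interest and is not visible from the iteration.
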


Our approach to proving Theorem \ref{thm:main} is based on studying how the strong nilpotency of $JH$ interacts with the formal expansion for the inverse of $F$. To outline this approach here, we expand our discussion from polynomial maps to power series maps. A power series map is a formal map $F = (F_1,\ldots,F_n)$ whose components are elements of the formal power series ring $k[[X_1,\ldots,X_n]]$. In other words, the components of $F$ take the form
\begin{align*}
F_i = \sum_{ \alpha  } \frac{ F_{i,\alpha}}{ \alpha!} X^\alpha
\end{align*}
where $F_{i,\alpha} \in k$, the sum is taken over all multi-indices $\alpha=(\alpha_1,\ldots,\alpha_n) \in \mathbb{Z}_{ \geq 0}^n$, $\alpha! := \alpha_1! \ldots \alpha_n!$, and $X^\alpha := X_1^{\alpha_1} \ldots X_n^{\alpha_n}$. Two formal power series maps may be composed to form a new formal power series map. A power series map is a polynomial map if only finitely many of the coefficients $\{F_{i,\alpha} : 1 \leq i \leq n, \alpha \in \mathbb{Z}_{ \geq 0}^n \}$ are nonzero. Consider now $F = X - H$ of the form 
\begin{align} \label{eq:power}
F_i = X_i - \sum_{ |\alpha| \geq 2}\frac{ H_{i,\alpha}}{ \alpha!} X^\alpha,
\end{align}
where the sum is taken over all $\alpha \in \mathbb{Z}_{ \geq 0}^n$ such that $|\alpha| = |\alpha_1| + \ldots +|\alpha_n| \geq 2$. (The choice of minus sign in \eqref{eq:power} simplifies various formulas in the sequel.) Every $F$ of the form in \eqref{eq:power} has a compositional inverse power series map $F^{-1}$ of the form $F^{-1} = X_i +  \sum_{ |\alpha| \geq 2}\frac{ G_{i,\alpha}}{ \alpha!} X^\alpha$. The Jacobian conjecture is precisely the statement that the inverse power series map of every Keller map of the form \eqref{eq:power} is not just a power series map, but in fact a polynomial map. 

Given $F = X - H$ of the form \eqref{eq:power}, it is possible to give a formula for the coefficients $G_{i,\alpha}$ of the power series map inverse of $F$ in terms of a sum over weighted rooted trees, in which the weights of these trees depend on the coefficients $H_{i,\alpha}$ \cite{JP}. Sketching this formula here (we will provide more details in the Section \ref{sec:proof}), we let $\mathbb{S}_{i,\alpha}$ denote the set of finite rooted trees in which:
\begin{itemize}
\item Every vertex is either a leaf or has two or more children.
\item Every vertex of the tree has a \emph{type} in $\{1,\ldots,n\}$. The root has type $i$ and the sum of the leaf types is equal to $\alpha$ (so that in particular the number of leaves is equal to $|\alpha|=\alpha_1+\ldots+\alpha_n$).
\item The leaves of the same types have additional labels distinguishing themselves from one another.
\end{itemize}
The inversion formula \cite[Theorem 2.3]{JP} then states that 
\begin{align} \label{eq:Gform}
G_{i,\alpha} = \sum_{ \mathcal{T} \in \mathbb{S}_{i,\alpha}}\mathcal{E}_H(\mathcal{T}) \qquad \text{where} \qquad \mathcal{E}_H(\mathcal{T} ) :=  \prod_{ v \in \mathcal{I}} H_{\tau(v),\mu(v)} \in k,
\end{align}
and the product $\mathcal{E}_H(\mathcal{T})$ is taken over the set $\mathcal{I}$ of internal vertices of $\mathcal{T}$, $\tau(v)$ is the type of $v$, and $\mu(v)$ is the multi-index counting the types of the children of $v$. We remark that if $H$ has degree $d$, then $H_{i,\beta} = 0$ whenever $|\beta|  = \beta_1 + \ldots + \beta_n > d$. In particular, if $H$ has degree $d$ then the sum in \eqref{eq:Gform} is supported on the trees in $\mathbb{S}_{i,\alpha}$ in which every vertex has at most $d$ children.

Suppose that we have a polynomial map $F = X - H$ and we wish to verify that the inverse power series map $F^{-1} = X+ G$ of $F$ is in fact a polynomial map. The inversion formula \eqref{eq:Gform} gives us a concrete approach to this problem: all we need to do is show that for all $|\alpha|$ sufficiently large, the sum in \eqref{eq:Gform} is zero. It transpires that the strong nilpotency of $JH$ can be utilised to ensure that this is the case.

Before stating our second main result, Theorem \ref{thm:power}, which is concerned with specialising the inversion formula \eqref{eq:Gform} for strongly nilpotent $JH$, we remark that for any power series map $F$ we may define the Jacobian matrix of $JF$ of $F$ to be the matrix whose $(i,j)^{\text{th}}$ entry is given by the element
\begin{align} \label{eq:entry}
JF_{i,j} =  \frac{ \partial}{\partial X_j} F_i = \sum_{ \alpha} \frac{ F_{i,\alpha+\mathbf{e}_j} }{ \alpha!} X^\alpha,
\end{align}
of $k[[X_1,\ldots,X_n]]$, where $\mathbf{e}_j = (0,\ldots,0,1,0,\ldots,0)$ is the multi-index with a $1$ in the $j^{\text{th}}$ slot and zeroes elsewhere. As in the case where $F$ is a polynomial map, we may speak plainly of the nilpotency and strong nilpotency of the Jacobian matrix of a power series map $F$ with components in the power series ring $k[[X_1,\ldots,X_n]]$. 
 
Given a labelled rooted tree, we refer to the set of vertices a graph distance $k$ away from the root as generation $k$, or its $k^{\text{th}}$ generation. Clearly then the $0^{\text{th}}$ generation comprises only the root. Given a labelled rooted tree $\mathcal{T}$ in $\mathbb{S}_{i,\alpha}$, the height of $\mathcal{T}$, written $\mathrm{height}(\mathcal{T})$, is the largest integer $k$ such that generation $k$ of $\mathcal{T}$ is non-empty. 

We will see in Lemma \ref{lem:fern} that if the Jacobian matrix of $H$ is strongly nilpotent with index $p$, then certain sums of energies $\mathcal{E}_H(\mathcal{T})$ (given in \eqref{eq:Gform}) over a class of tree called \emph{ferns of height $p$} are zero. Using the product structure of the energy $\mathcal{E}_H(\mathcal{T})$, in Section \ref{sec:proof} we show that the set of trees in $\mathbb{S}_{i,\alpha}$ 
of height at least $p$ may be partitioned into equivalence classes in such a way that the sum of $\mathcal{E}_H(\mathcal{T})$ over each of these classes is equal to zero.  As a result, we are able to prove the following result, which says that whenever $H$ is strongly nilpotent with index $p$, the sum in \eqref{eq:Gform} is supported on trees whose $p^{\text{th}}$ generation is empty:

\begin{thm} \label{thm:power}
Let $F = X  - H$ be a power series map with components of the form $H_i = \sum_{|\alpha| \geq 2} \frac{H_{i,\alpha}}{\alpha!} X^\alpha$ and write $F^{-1}_i = X_i + \sum_{ |\alpha| \geq 2} \frac{ G_{i,\alpha}}{\alpha!} X^\alpha$ for the components of the inverse of $F$. If $JH$ is strongly nilpotent with index of nilpotency $p$, then we have the refinement
\begin{align} \label{eq:Gform2}
G_{i,\alpha} =   \sum_{\substack{ \mathcal{T} \in \mathbb{S}_{i,\alpha}\\
                                 \mathrm{height}(\mathcal{T}) \leq p-1}} \mathcal{E}_H(\mathcal{T})
\end{align}
of \eqref{eq:Gform}.
\end{thm}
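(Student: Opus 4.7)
The approach is to start from the unrestricted inversion formula \eqref{eq:Gform} and show that the contribution to $G_{i,\alpha}$ from trees in $\mathbb{S}_{i,\alpha}$ of height at least $p$ vanishes. I would accomplish this by partitioning $\{\mathcal{T} \in \mathbb{S}_{i,\alpha} : \mathrm{height}(\mathcal{T}) \geq p\}$ into equivalence classes on which the sum of energies $\mathcal{E}_H(\mathcal{T})$ is identically zero, with the vanishing on each class coming directly from the fern-sum identity of Lemma \ref{lem:fern}.

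To construct the partition, for each $\mathcal{T}$ of height $\geq p$ I would isolate a distinguished \emph{fern} $\mathcal{F}(\mathcal{T})$ of height $p$ inside $\mathcal{T}$, rooted at the root of $\mathcal{T}$. One natural prescription is to fix a total order on labelled rooted trees (e.g.\ a depth-first left-to-right traversal using the prescribed leaf labels) and select the first generation-$p$ vertex $v_p$ in that order. The fern then consists of the internal vertices $v_0, v_1, \ldots, v_{p-1}$ along the root-to-$v_p$ path together with all of their children. Children of these spine vertices that are internal in $\mathcal{T}$ (rather than leaves of $\mathcal{T}$) serve as attachment sites for pendant subtrees which lie outside $\mathcal{F}(\mathcal{T})$. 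Two trees are then declared equivalent precisely when they coincide outside their distinguished ferns, in the sense that their pendant subtree configurations agree as labelled rooted trees attached in the same positions.

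Within a fixed equivalence class, the product structure
\begin{align*}
\mathcal{E}_H(\mathcal{T}) = \prod_{v \in \mathcal{I}(\mathcal{T})} H_{\tau(v),\mu(v)}
\end{align*}
factors into a product over fern-internal vertices and a product over outside-internal vertices. The outside factor is constant across the class, so the class-sum equals this common outside factor times a sum of fern energies with fixed boundary data (root type, leaf multi-index of $v_p$, and the types/multiplicities of the non-spine children of each spine vertex needed to match the attachment points of the pendant subtrees). This residual sum is exactly a fern-sum of the form handled by Lemma \ref{lem:fern}, and so vanishes. Summing the resulting zero contributions over all equivalence classes yields the refinement \eqref{eq:Gform2}.

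The main obstacle is verifying that this partition is genuinely well-defined, equivalently that the canonical choice of $\mathcal{F}(\mathcal{T})$ is stable under swapping the fern within an equivalence class: if $\mathcal{T}$ and $\mathcal{T}'$ have the same pendant subtree configuration but different ferns, the canonical procedure must designate the same spine vertices and the same $v_p$ in each. A safeguard is to let the canonical rule read off the fern location from data visible on the pendant configuration alone (using the fern's internal labels only to break ties that do not affect the choice of spine), so that replacing the fern by another admissible one leaves the chosen location invariant. Pinning down such a tie-breaking convention and reconciling the resulting fern boundary data precisely with the hypotheses of Lemma \ref{lem:fern} is the combinatorial heart of the argument.
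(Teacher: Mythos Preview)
Your plan is the paper's proof. The ``main obstacle'' you flag dissolves once the spine is chosen from data that is manifestly independent of vertex types: the paper picks the leaf $v^*$ of greatest lexicographic label among all leaves in generation $\geq p$, sets $v_p = v^*$, and lets $v_{\ell-1}$ be the parent of $v_\ell$ for $1\le \ell\le p$ (so $v_0$ need not be the root). The equivalence relation is then stated directly as ``identical labelled trees except possibly for the types $\tau(v_1),\ldots,\tau(v_{p-1})$'', with no reference to swapping ferns or matching pendant configurations. Since the spine is determined by $(V,E,\phi)$ alone and not by $\tau$, it is automatically constant across each class; each class has exactly $n^{p-1}$ members, and the energy factors as an off-spine constant $\lambda_{[\mathcal{T}]}$ times $\prod_{\ell=1}^p H_{k_{\ell-1},\alpha^{(\ell)}+\mathbf{e}_{k_\ell}}$ with $k_0=\tau(v_0)$ and $k_p=\tau(v_p)$ fixed, so Lemma~\ref{lem:fern} applies verbatim.
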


We now show that Theorem \ref{thm:main} is a consequence of Theorem \ref{thm:power}.

\begin{proof}[Proof of Theorem \ref{thm:main} assuming Theorem \ref{thm:power}]
Let $F = X - H$ be a polynomial map such that $H$ has degree $d$ and $JH$ is strongly nilpotent with index $p$. Then according to Theorem \ref{thm:power}, if $F^{-1} = X + G$ is the inverse power series map of $F = X - H$, then the components of $G$ are given by $G_i = \sum_{ |\alpha| \geq 2} \frac{G_{i,\alpha}}{\alpha!}X^\alpha$ with $G_{i,\alpha}$ given in \eqref{eq:Gform2}. 

\color{black}
On the other hand, as remarked under \eqref{eq:Gform}, since $H$ has degree $d$, the sum in \eqref{eq:Gform2} is supported on trees in which every internal vertex has at most $d$ children. Any tree with height at most $p-1$ and in which every internal vertex has at most $d$ children may have at most $d^{p-1}$ leaves. Since $G_{i,\alpha}$ is a sum over trees with $|\alpha| = \alpha_1 + \ldots + \alpha_n$ leaves, it follows from \eqref{eq:Gform2} that $G_{i,\alpha}$ may only be nonzero when $|\alpha| \leq d^{p-1}$. In other words, 
\begin{align*}
\mathrm{deg}(F^{-1}) \leq \mathrm{deg}(F)^{p-1},
\end{align*}
and in particular, the inverse power series map $F^{-1}$ of $F$ is in fact a polynomial map, completing the proof of Theorem \ref{thm:main}. 

\end{proof}

\color{black}

Before proceeding with the proof of Theorem \ref{thm:power} in Section \ref{sec:proof}, let us close the introduction by briefly touching on the works of Singer \cite{singer1,singer2}, who uses combinatorial arguments to study the invertibility of polynomial maps, concentrating mainly on the quadratic case. Let us highlight \cite{singer1} in particular, in which Singer gives an ingenious combinatorial argument involving ranking Otter trees (\cite{otter}) to establish that whenever $F = X - H$ with $H$ quadratic (i.e.\ $d=2$) and $JH^3 =0$ (that is, weakly nilpotent with index $p=3$), we have
\begin{align*}
\mathrm{deg}(F^{-1}) \leq 6,
\end{align*}
regardless of the dimension $n$. This bound is sharp; see Example 7.4.15 of \cite{EPA}. Singer's argument involves showing that the vast majority of binary trees have the property that they are the least member of a \emph{shuffle class} of size at most $3!$. Unfortunately, it is not immediately clear how to generalise his approach to tackle the case $d=2, JH^e = 0$ for $e \geq 4$.

\vspace{5mm}
That concludes the introduction, in the next (and final) section, we supply a more detailed account of the inversion formula \eqref{eq:Gform}, state and prove the strong fern lemma (Lemma \ref{lem:fern}), and thereafter prove Theorem \ref{thm:power}.

\section{Proof of Theorem \ref{thm:power} } \label{sec:proof}

We begin in Section \ref{sec:formal}, which is lifted entirely from \cite{JP}, by stating the inversion formula for formal power series maps. In the following Section \ref{sec:strongfern} we state and prove a variant of a result by van den Essen and Hubbers \cite{EH} characterising strong nilpotency in terms of sums of certain trees called \emph{ferns}. Finally, in Section \ref{sec:mainproof} we tie our arguments together to prove Theorem \ref{thm:power}. 

\subsection{The formal inversion formula} \label{sec:formal}
We begin by expanding further on the inversion formula outlined in \eqref{eq:Gform}. We follow the exact formulation given in \cite{JP}, though we mention here that the formula we use is essentially equivalent to earlier inversion formulas due to Wright \cite{wright} and Haiman and Schmidt \cite{HS}. 

A rooted tree is a finite graph-theoretic tree $(V,E)$ (with vertex set $V$ and edge set $E$) with a designated root vertex $v_0$ in $V$. 
For $m \geq 0$, let $V_m$ denote the set of vertices whose graph distance from the root is $m$. We refer to $V_m$ as generation $m$ of the tree, and note that the $0^{\text{th}}$ generation $V_0 = \{ v_0 \}$ consists of just the root $v_0$. Each vertex $w$ in $V_{m+1}$ has a unique parent vertex $v$ in $V_{m}$, and in this case we say $w$ is a child of $v$. We say a vertex is a leaf if it has no children, and we say it is internal if it has at least one child. The vertex set may be partitioned into $V = \mathcal{I} \sqcup \mathcal{L}$, where $\mathcal{I}$ the set of internal vertices and $\mathcal{L}$ is the set of leaves. The height of $(V,E)$, writting $\mathrm{height}(V,E)$, is the maximum $m$ such that $V_m$ is non-empty. We say a rooted tree is \emph{proper} if every internal vertex has at least two children. For the remainder of the paper, the word \emph{tree} will refer to a proper rooted tree.

We will consider labellings of trees. Define $[n] := \{1,\ldots,n\}$ and for a multi-index $\alpha = (\alpha_1,\ldots,\alpha_n)$ of non-negative integers define the set $[\alpha]$
\begin{align*}
[\alpha] := \{ (j,k) \in \mathbb{Z}^2 : 1 \leq k \leq \alpha_j \}.
\end{align*}
We will consider trees whose leaves are in bijection with $[\alpha]$. More specifically, let $\mathbb{S}_{i,\alpha}$ denote the set of quadruplets $\mathcal{T} := (V,E,\tau,\phi)$ with the following properties:
\begin{itemize}
\item The pair $(V,E)$ denotes a proper rooted tree.
\item The labelling function $\phi:\mathcal{L} \to [\alpha]$ is a bijection between the leaves $\mathcal{L}$ of $(V,E)$ and the set $[\alpha]$, giving each leaf a \emph{label} in $[\alpha]$.
\item The typing function $\tau:V \to [n]$ gives each vertex of the tree a \emph{type} in $[n]$ according to certain rules: it gives the root type $i$, and the type given to each leaf respects the label of that leaf, in the sense that that whenever $\phi(v) = (j,k)$ for some $k \in \{1,2,\ldots\}$, we have $\tau(v) = j$. We therefore think of the leaf $(j,k)$ as the $k^{\text{th}}$ leaf of type $j$. There are no constraints on the types of non-root internal vertices. 
\end{itemize}

\begin{figure}[ht!]
  \centering   
 \begin{forest}
[1, fill=col1
[2,  fill=col2
[{4,1}, fill=col4]
[{3,2},fill=col3]
]
[3,fill=col3
[{3,1},  fill=col3]
[{1,1},  fill=col1]
[{1,2}, fill=col1]
]
[{1,3},  fill=col1]
]
\end{forest}
  \caption{An element $\mathbb{S}_{i,\alpha}$ where $i = 1$ and $\alpha = (3,0,2,1)$. A vertex labelled $(j,k)$ has type $j$. The colours of the vertices correspond to their types.}\label{fig:treefinal}
\end{figure}
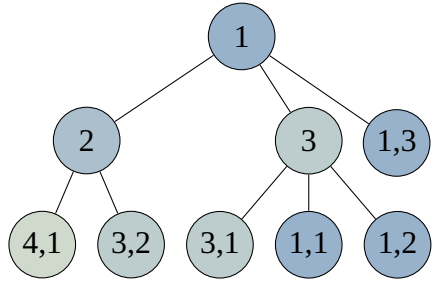

We say two labelled trees  $(V,E,\tau,\phi)$ and $(V',E',\tau',\phi')$ are \emph{isomorphic} if there is a bijection $\Psi:V \to V'$ between the underlying vertex sets preserving all of the structure of the tree: $\left( \Psi(u), \Psi(v) \right) \in E'$ if and only if $(u,v) \in E$, $\tau'(\Psi(u)) = \tau(u)$ and $\phi'(\Psi(u) ) = \phi(u)$. Whenever we speak of a set or collection of trees, technically we mean a set or collection of isomorphism classes according to this equivalence. 

So in summary, $\mathbb{S}_{i,\alpha}$ is the set of (isomorphism classes of) finite proper rooted trees whose leaves are labelled by $[\alpha]$, and whose vertices have types in $\{1,\ldots,n\}$ in such a way that the root has type $i$ and the leaves have type equal to the first coordinate of their label.

For an internal vertex $v$ of a labelled tree $\mathcal{T}$ in $\mathbb{S}_{i,\alpha}$, the outdegree $\mu(v)$ of $v$ is defined to be the multi-index whose $j^{\text{th}}$ component is given by 
\begin{align} \label{eq:outdegree}
\mu(v)_j := \# \{ w \in V : \text{$w$ is a child of $v$ and $\tau(w) = j$} \}.
\end{align}
Given a proper labelled tree $\mathcal{T} \in \mathbb{S}_{i,\alpha}$ and a collection $H := \left( H_{j,\alpha}  : 1 \leq j \leq n, \alpha \in \mathbb{Z}_{\geq 0}^n : |\alpha| \geq 2 \right)$ of elements of $k$, we define the $H$-energy of the tree $\mathcal{T}$ to be the element of $k$ given by 
\begin{align} \label{eq:energy}
\mathcal{E}_H \left( \mathcal{T}  \right) := \prod_{ v \in \mathcal{I} }H_{ \tau(v) , \mu(v) } .
\end{align}
\color{black}
The following result from \cite{JP} is a more precise statement of the inversion formula \eqref{eq:Gform}, stating that the coefficients of the inverse of a power series map may be given in terms of sums of tree energies over proper trees.

\begin{thm}[Theorem 2.3 of \cite{JP}] \label{thm:inversion}
Let $F$ be an element of $k[[X_1,\ldots,X_n]]^n$ such that the $i^{\text{th}}$ component of $F$ takes the form
\begin{align*}
F_i(X) = X_i - \sum_{ |\alpha| \geq 2 } \frac{ H_{i,\alpha}}{\alpha!} X^\alpha
\end{align*}
for some coefficients $(H_{i,\alpha})$. Then the $i^{\text{th}}$-component of the inverse $G$ of $F$ is given by $G_i(X) = X_i +   \sum_{ |\alpha| \geq 2 } \frac{ G_{i,\alpha}}{\alpha!} X^\alpha$, where for $|\alpha| \geq 2$,
\begin{align} \label{eq:GDEF}
G_{i, \alpha}  = \sum_{ \mathcal{T} \in \mathbb{S}_{i,\alpha} } \mathcal{E}_H \left( \mathcal{T}  \right).
\end{align}

\end{thm}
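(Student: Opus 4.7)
The plan is to prove the formula by induction on $|\alpha|$, exploiting the recursive self-similarity of both the inversion equation and the tree enumeration. The starting point is to rewrite $F(G(X)) = X$ as the fixed-point equation
\begin{align*}
G_i \;=\; X_i \;+\; \sum_{|\beta| \geq 2} \frac{H_{i,\beta}}{\beta!} \prod_{j=1}^n G_j^{\beta_j},
\end{align*}
and extract the coefficient of $X^\alpha / \alpha!$ on both sides using the multivariate Leibniz rule. This produces a recursion expressing $G_{i,\alpha}$ as a sum over outdegree multi-indices $\beta$ with $|\beta| \geq 2$ and over tuples $\vec\gamma = (\gamma^{(j,k)})_{1 \leq j \leq n,\, 1 \leq k \leq \beta_j}$ of multi-indices summing to $\alpha$, of the form
\begin{align*}
G_{i,\alpha} \;=\; \sum_{|\beta| \geq 2} \frac{H_{i,\beta}}{\beta!} \sum_{\vec\gamma} \binom{\alpha}{\vec\gamma} \prod_{j,k} G_{j,\gamma^{(j,k)}},
\end{align*}
where $\binom{\alpha}{\vec\gamma} = \alpha! / \prod_{j,k} \gamma^{(j,k)}!$ is the multinomial coefficient.

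Next I would match this recursion with a decomposition of the tree sum. For $\mathcal{T} \in \mathbb{S}_{i,\alpha}$ with $|\alpha| \geq 2$, the root $v_0$ is necessarily internal with outdegree $\beta := \mu(v_0)$ satisfying $|\beta| \geq 2$. Removing $v_0$ splits $\mathcal{T}$ into $|\beta|$ labelled subtrees, one rooted at each child; each subtree, after relabelling its leaves by consecutive indices of each type, lies in some $\mathbb{S}_{j,\gamma^{(j,k)}}$. The energy factorizes cleanly as $\mathcal{E}_H(\mathcal{T}) = H_{i,\beta} \prod_{j,k} \mathcal{E}_H(\mathcal{T}^{(j,k)})$, directly producing the shape of the recursion.

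The key combinatorial identity I expect to verify is that each isomorphism class of labelled tree in $\mathbb{S}_{i,\alpha}$ with root outdegree $\beta$ corresponds to exactly $\beta! = \prod_j \beta_j!$ \emph{ordered} decompositions of the root's children, while $\binom{\alpha}{\vec\gamma}$ counts the ways of distributing the $[\alpha]$-labels (type by type) among these ordered children. The delicate point is that any nontrivial permutation of equal-type children of the root produces a distinct ordered decomposition, because those children dominate disjoint nonempty sets of leaf labels; hence the quotient by $\beta!$ is exact and there is no residual automorphism correction.

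Combining these ingredients, and invoking the induction hypothesis $\sum_{\mathcal{T}^{(j,k)}} \mathcal{E}_H(\mathcal{T}^{(j,k)}) = G_{j,\gamma^{(j,k)}}$ on each subtree (valid since $|\gamma^{(j,k)}| < |\alpha|$), closes the induction. The base case $|\alpha| = 1$ is handled by hand: $\mathbb{S}_{i,\mathbf{e}_j}$ consists of a single-vertex tree when $i=j$ (with empty-product energy $1$) and is empty otherwise, matching the linear part $G_j = X_j + O(|X|^2)$ of the inverse. I expect the principal obstacle to be exactly this combinatorial bookkeeping in the inductive step: the interplay between the $1/\alpha!$ normalization of the coefficients, the $1/\beta!$ symmetry factor at the root, and the multinomials $\binom{\alpha}{\vec\gamma}$ must be tracked carefully to show that they conspire to produce precisely the sum of tree energies on the right-hand side of \eqref{eq:GDEF}.
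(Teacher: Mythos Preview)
The paper does not prove this theorem at all: it is quoted verbatim as Theorem~2.3 of \cite{JP} and used as a black box in the proof of Theorem~\ref{thm:power}. So there is no ``paper's own proof'' to compare against.

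That said, your inductive argument is sound and is essentially the standard route to such tree expansions. The recursion you extract from $G=X+H(G)$ is correct, and the combinatorial point you flag as delicate --- that the $1/\beta!$ at the root exactly accounts for reorderings of same-type children because their leaf-label sets are disjoint and nonempty, so no nontrivial automorphism fixes the labelled tree --- is exactly the right observation and is valid here since every subtree below the root contains at least one leaf. The base case $|\alpha|=1$ is handled correctly once one notes that the single-vertex tree is (vacuously) proper and has empty-product energy $1$. One small bookkeeping remark: in the inductive step you should also allow $|\gamma^{(j,k)}|=1$ in the sum over $\vec\gamma$, since a child of the root may itself be a leaf; your extended convention $G_{j,\mathbf e_m}=\delta_{jm}$ covers this, but it is worth saying explicitly.
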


\subsection{The strong fern lemma} \label{sec:strongfern}

Suppose $H$ is a power series map whose components take the form $H_i = \sum_{ |\alpha| \geq 2 } \frac{ H_{i,\alpha}}{\alpha!} X^\alpha$, and such that the the Jacobian matrix of $H$ is strongly nilpotent. Our next result, which is fundamentally identical to a result in \cite{EH}, describes the combinatorial implications of the strong nilpotency of $JH$. 

\begin{lem} \label{lem:fern}
Let $H$ be a power series map with components $H_{i,\alpha} = \sum_{ |\alpha| \geq 2 } \frac{ H_{i,\alpha}}{\alpha!} X^\alpha$. Suppose $JH$ is strongly nilpotent with index $p$. Then for any $i,j \in [n]$, and for any multi-indices $\alpha^{(1)},\ldots,\alpha^{(p)}$ in $\mathbb{Z}_{ \geq 0}^n$, we have 
\begin{align} \label{eq:fernsum}
\sum_{ 1 \leq k_0,\ldots,k_{p} \leq n } \mathrm{1}_{\{k_0 =i, k_p = j\}}  \prod_{ \ell=1}^p H_{k_{\ell-1},\alpha^{(\ell)}+\mathbf{e}_{k_\ell} } = 0,
\end{align} 
where
\begin{align*}
 \mathrm{1}_{\{k_0 =i, k_p = j\}}   =
\begin{cases}
1 \qquad \text{if $k_0=i$ and $k_p =j$}\\
0 \qquad \text{otherwise}.
\end{cases}
\end{align*}
\end{lem}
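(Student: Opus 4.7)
The plan is to interpret the hypothesis that $JH$ is strongly nilpotent with index $p$ as the matrix identity
\begin{align*}
JH(X^{(1)}) \, JH(X^{(2)}) \cdots JH(X^{(p)}) = 0,
\end{align*}
read as an equality of matrices whose entries lie in the $pn$-variable formal power series ring $k[[X^{(1)}_1,\ldots,X^{(p)}_n]]$. Once this interpretation is fixed, the identity \eqref{eq:fernsum} is simply the statement that every coefficient of the $(i,j)$ entry of this product vanishes, and the proof reduces to a coefficient extraction.

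First I would write the $(i,j)$ entry of the matrix product out as an iterated sum
\begin{align*}
\bigl[ JH(X^{(1)}) \cdots JH(X^{(p)}) \bigr]_{i,j} = \sum_{k_1,\ldots,k_{p-1} \in [n]} \prod_{\ell=1}^p JH_{k_{\ell-1},k_\ell}(X^{(\ell)}),
\end{align*}
with the boundary conventions $k_0 := i$ and $k_p := j$. Next, using \eqref{eq:entry} to expand each factor as
\begin{align*}
JH_{k_{\ell-1},k_\ell}(X^{(\ell)}) = \sum_{\alpha^{(\ell)}} \frac{H_{k_{\ell-1}, \alpha^{(\ell)} + \mathbf{e}_{k_\ell}}}{\alpha^{(\ell)}!} (X^{(\ell)})^{\alpha^{(\ell)}},
\end{align*}
and multiplying out, one collects the coefficient of the monomial $(X^{(1)})^{\alpha^{(1)}} \cdots (X^{(p)})^{\alpha^{(p)}}$, which comes out to
\begin{align*}
\frac{1}{\alpha^{(1)}! \cdots \alpha^{(p)}!} \sum_{k_1,\ldots,k_{p-1} \in [n]} \prod_{\ell=1}^p H_{k_{\ell-1},\alpha^{(\ell)}+\mathbf{e}_{k_\ell}}.
\end{align*}
Strong nilpotency forces this coefficient to vanish for every choice of $\alpha^{(1)},\ldots,\alpha^{(p)}$, and reintroducing the indicator $\mathbf{1}_{\{k_0=i, k_p = j\}}$ so as to sum freely over all of $k_0,\ldots,k_p$ recovers \eqref{eq:fernsum} on the nose.

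The main obstacle is conceptual rather than computational: the lemma is really a direct bookkeeping translation of the matrix identity expressing strong nilpotency into identities among the individual coefficients of $H$, mediated by \eqref{eq:entry}. The one subtlety worth highlighting is pinning down the precise formal meaning of strong nilpotency for a power series Jacobian; taking it to mean vanishing of the matrix product in the $pn$-variable power series ring (which agrees with the original pointwise definition in the polynomial case, since $k$ is infinite), uniqueness of formal power series coefficients then delivers the result without any appeal to the characteristic-zero hypothesis.
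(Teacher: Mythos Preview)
Your proof is correct and follows essentially the same approach as the paper's: write out the $(i,j)$ entry of the vanishing matrix product $JH(X^{(1)})\cdots JH(X^{(p)})$, expand each factor via \eqref{eq:entry}, and extract the coefficient of $(X^{(1)})^{\alpha^{(1)}}\cdots (X^{(p)})^{\alpha^{(p)}}$. The only difference is cosmetic---the paper carries the indicator $\mathbf{1}_{\{k_0=i,k_p=j\}}$ throughout rather than imposing $k_0=i$, $k_p=j$ as boundary conventions and reintroducing the indicator at the end---and your added remark about the formal meaning of strong nilpotency for power series maps is a helpful clarification that the paper leaves implicit.
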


Let us explain why we call Lemma \ref{lem:fern} the \emph{strong fern lemma}.
A \emph{fern} of height $p$ is a rooted tree of height $p$ with a designated \emph{sink} vertex in its $p^{\text{th}}$ generation, and with the property that for each $0 \leq \ell \leq p-1$, the $\ell^{\text{th}}$ generation contains exactly one internal vertex. 
The sums occuring in \eqref{eq:fernsum} may be then be regarded as an energy sum $\sum \mathcal{E}_H(\mathcal{T})$, where the sum is taken over the set of labelled ferns of height $p$ with root of type $i$, sink of type $j$, and with leaf types $\alpha^{(1)},\ldots,\alpha^{(p)}$. By leaf types $(\alpha^{(1)},\ldots,\alpha^{(p)})$, we mean that for $0 \leq \ell \leq p-1$ the leaves in the $\ell^{\text{th}}$ generation have types $\alpha^{(\ell)}$, whereas the leaves in the final generation have types $\alpha^{(p)} + \mathbf{e}_j$.

\color{black}
\begin{proof}[Proof of lemma \ref{lem:fern}]
The strong nilpotency of $JH$ tells us that for any $X^{(1)},\ldots,X^{(p)}$, the $n \times n$ matrix of power series $JH(X^{(1)})\ldots JH(X^{(p)})$ is equal to zero. Studying the $(i,j)^{\text{th}}$ entry of this matrix product, we see that
\begin{align} \label{eq:orange}
 \left( JH(X^{(1)})\ldots JH(X^{(p)}) \right)_{i,j} = \sum_{ 1 \leq k_0,\ldots,k_{p} \leq n } \mathrm{1}_{\{k_0 =i, k_p = j\}} \prod_{\ell=1}^p JH(X^{(\ell)})_{k_{\ell-1},k_\ell},
\end{align} 
which is itself a power series in the variables $\{ X^{(\ell)}_k : 1 \leq \ell \leq p, 1 \leq k \leq n \}$, is equal to zero. 

Using \eqref{eq:entry} in \eqref{eq:orange} we have 
\begin{align*}
0 = \left( JH(X^{(1)})\ldots JH(X^{(p)}) \right)_{i,j} = \sum_{ 1 \leq k_0,\ldots,k_{p} \leq n } \mathrm{1}_{\{k_0 =i, k_p = j\}} \prod_{\ell=1}^p \sum_{ |\alpha^{(\ell)}| \geq 1} \frac{ H_{k_{\ell-1},\alpha^{(\ell)}+\mathbf{e}_{k_\ell} }}{\alpha^{(\ell)}!} (X^{(\ell)})^{ \alpha^{(l)} } .
\end{align*} 
By examining the coefficient of the monomial $\prod_{ \ell = 1}^p ( X^{(\ell)})^{ \alpha^{(l)} } = \prod_{ \ell = 1}^p \prod_{ m = 1}^n (X^{(\ell)}_m )^{\alpha^{(\ell)}_m}$, we see that for all $\alpha^{(1)},\ldots,\alpha^{(\ell)}$ we have 
\begin{align*}
\sum_{ 1 \leq k_0,\ldots,k_{p} \leq n } \mathrm{1}_{\{k_0 =i, k_p = j\}}  \prod_{ \ell=1}^p H_{k_{\ell-1},\alpha^{(\ell)}+\mathbf{e}_{k_\ell} } = 0,
\end{align*} 
completing the proof of Lemma \ref{lem:fern}.
\end{proof}

Lemma \ref{lem:fern} has an analogue for when the Jacobian matrix $JH$ is weakly nilpotent with index $e$ (as opposed to strongly nilpotent with index $p$). This analogue invovles an energy sum $\mathcal{E}_H(\mathcal{T})$ over a far larger set of labelled ferns. More specifically, in the weakly nilpotent case, we fix a single multi-index $\alpha$ and sum over all ferns of height $e$ whose leaftype across all generations sum to $\alpha$; in the strongly nilpotent case, we fix $p$ multi-indices $\alpha^{(1)},\ldots,\alpha^{(p)}$, and sum over ferns such that the leaf types in generation $\ell$ are given by $\alpha^{(\ell)}$. In both cases we have a root of type $i$ and a sink of type $j$. We refer the reader to Lemma 2.8 of \cite{JP} for the weak case in our notation, thought we mention that the idea is present in earlier works by Wright \cite{wright} and Singer \cite{singer1,singer2}.

\subsection{Proof of Theorem \ref{thm:power}} \label{sec:mainproof}

\begin{proof}[Proof of Theorem \ref{thm:power}]
Let $F = X - H$ be as in the statement of Theorem \ref{thm:power}. Then according to Theorem \ref{thm:inversion}, the inverse $F^{-1} = X + G$ of $F$ has coefficients 
\begin{align*} 
G_{i, \alpha}  = \sum_{ \mathcal{T} \in \mathbb{S}_{i,\alpha} } \mathcal{E}_H \left( \mathcal{T}  \right).
\end{align*}
We now use the strong nilpotency of $JH$ and Lemma \ref{lem:fern} to show that this sum is supported on trees with height at most $p$. Indeed, write
\begin{align} \label{eq:GDEFX}
G_{i, \alpha}  = \sum_{\substack{ \mathcal{T} \in \mathbb{S}_{i,\alpha}\\
                                 \mathrm{height}(\mathcal{T}) \leq p-1}} \mathcal{E}_H(\mathcal{T})  +   \sum_{\substack{ \mathcal{T} \in \mathbb{S}_{i,\alpha}\\
                                 \mathrm{height}(\mathcal{T}) \geq p}} \mathcal{E}_H(\mathcal{T}).
\end{align}
Recall that for any tree $\mathcal{T}$ in $\mathbb{S}_{i,\alpha}$, the leaves of $\mathcal{T}$ are in bijection with $[\alpha] := \{ (j,k) \in \mathbb{Z}^2 : 1 \leq k \leq \alpha_j \}$. Order the elements of $[\alpha]$ (that is, the leaves) lexicographically. (Any other ordering of $[\alpha]$ would do just as well.) Suppose $\mathrm{height}(\mathcal{T}) \geq p$, so that $\mathcal{T}$ has at least one leaf in generation $p$ or higher. Choose the leaf $v^*$ in generation $p$ or higher that has the greatest label in the lexicographic ordering of $[\alpha]$. Since $v^*$ lies in generation $p$ or higher, we may define a sequence of vertices $(v_0,\ldots,v_p)$ in $\mathcal{T}$ by letting $v_p = v^*$, and letting $v_{\ell-1}$ be the parent of $v_\ell$ for every $1\leq \ell \leq p$. We call the sequence of vertices $(v_0,v_1,\ldots,v_p)$ the \emph{spine} of $\mathcal{T}$. Every tree in $\{\mathcal{T} \in \mathbb{S}_{i,\alpha} : \mathrm{height}(\mathcal{T}) \geq p \}$ has a canonical spine associated with the leaf in generation $p$ or higher that is highest in the lexicographic ordering of $[\alpha]$.

We now define an equivalence relation on $\{\mathcal{T} \in \mathbb{S}_{i,\alpha} : \mathrm{height}(\mathcal{T}) \geq p \}$, by declaring $\mathcal{T}$ and $\mathcal{T}'$ to be equivalent if they are identical but for the types $\tau(v_1),\ldots,\tau(v_{p-1})$ of their spines vertices $v_1,\ldots,v_{p-1}$. In other words, $\mathcal{T} \sim \mathcal{T}'$ if $\mathcal{T} = (V,E,\tau,\phi)$ and $\mathcal{T}' = (V',E',\tau', \phi')$, and there is a graph isomorphism $\Psi:V \to V'$ such that $\tau'(\Psi(v)) = \tau(v)$ and $\phi'(\Psi(v)) = \phi(v)$ for all $v \neq v_1,\ldots,v_{p-1}$. It is clear that this relation is indeed an equivalence relation. Each equivalence class contains exactly $n^{p-1}$ members, corresponding to the $n^{p-1}$ different ways of colouring the vertices $v_1,\ldots,v_{p-1}$ with colours in $\{1,\ldots,n\}$.

We will now show using Lemma \ref{lem:fern} that for any equivalence class $[\mathcal{T}]$ in $\{\mathcal{T} \in \mathbb{S}_{i,\alpha} : \mathrm{height}(\mathcal{T}) \geq p \}$ we have
\begin{align} \label{eq:fernclass}
\sum_{ \mathcal{T}' \in [\mathcal{T}]} \mathcal{E}_H(\mathcal{T}') = 0.
\end{align}
To see that \eqref{eq:fernclass} holds, let $\mathcal{T} \in \{ \mathcal{T} \in \mathbb{S}_{i,\alpha} : \mathrm{height}(\mathcal{T}) \geq p \}$, and let $(v_0,\ldots,v_p)$ be its spine. Let $\tau(v_0) = i, \tau(v_p)= j$, and for $1 \leq \ell \leq p$, recalling the notation \eqref{eq:outdegree} define $\alpha^{(\ell)} := \mu(v_{\ell-1}) - e_{\tau(v_{\ell})}$ to be the type sum of the children of $v_{\ell-1}$ lying off the spine. (Here, $\tau(v_\ell)$ is the type of $v_\ell$, and $e_{\tau(v_\ell)}$ is the multi-index with a $1$ in the $\tau(v_\ell)^{\text{th}}$ slot, with zeroes in the other slots.)

With the energy product \eqref{eq:energy} in mind, define the product $\lambda_{[\mathcal{T}]}$ in $k$ over terms off-the-spine in $\mathcal{T}$ by 
\begin{align*}
\lambda_{[\mathcal{T}]} = \prod_{v \notin \{v_0,v_1,\ldots,v_{p-1}\}} H_{\tau(v),\mu(v)},
\end{align*}
where the product is taken over all vertices in $\mathcal{T}$ not contained in the spine. Now since every element $\mathcal{T}'$ agrees with $\mathcal{T}$ everywhere but for the types of $(v_1,\ldots,v_{p-1})$, if the types of $v_1,\ldots,v_{p-1}$ in $\mathcal{T}'$ are given by $\tau(v_1)=k_1,\ldots,\tau(v_{p-1})=k_{p-1}$ we have 
\begin{align*}
\mathcal{E}_H(\mathcal{T}' ) = \lambda_{[\mathcal{T}]} \prod_{ \ell=1}^p H_{k_{\ell-1},\alpha^{(\ell)} + \mathbf{e}_{k_{\ell}} },
\end{align*}
where we are using the conventions $k_0 = i$ and $k_\ell = j$. 

In particular, we have
\begin{align*}
\sum_{ \mathcal{T}' \in [\mathcal{T}]} \mathcal{E}_H(\mathcal{T}')  = \lambda_{[\mathcal{T}]} \sum_{ 1 \leq k_0,\ldots,k_{p} \leq n }\prod_{ \ell=1}^p H_{k_{\ell-1},\alpha^{(\ell)} + \mathbf{e}_{k_{\ell}} } = 0,
\end{align*}
where the latter sum vanishes due to Lemma \ref{lem:fern}. That establishes \eqref{eq:fernclass}.

Since the equivalence classes partition $\{ \mathcal{T} \in \mathbb{S}_{i,\alpha} : \mathrm{height}(\mathcal{T}) \geq p \}$, we may partition the sum\\
 $  \sum_{\substack{ \mathcal{T} \in \mathbb{S}_{i,\alpha}    \mathrm{height}(\mathcal{T}) \geq p}} \mathcal{E}_H(\mathcal{T})$ into terms whose contribution is zero. It follows that the latter term in the expansion \eqref{eq:GDEFX} is zero, thereby completing the proof of Theorem \ref{thm:power}.
\end{proof}

\section*{Acknowledgements}
This research is supported by the EPSRC funded Project EP/S036202/1 \emph{Random fragmentation-coalescence processes out of equilibrium}.


\end{document}